\documentclass[11pt]{amsart}
\usepackage{graphicx}
\usepackage{float}

\usepackage{url}
\usepackage{setspace}
\setstretch{1.15}
\usepackage{tikz}
\DeclareMathAlphabet{\mathcal}{OMS}{cmsy}{m}{n}
\usepackage{mathrsfs}
\usepackage{indentfirst,csquotes}

\topmargin= 0.5cm
\textheight= 22cm
\textwidth= 32cc
\baselineskip=16pt
\oddsidemargin = 1cm
\evensidemargin = 1cm

\usepackage{amssymb,amsthm,amsmath}
\usepackage{listings}
\usepackage{xcolor}
\usepackage{xcolor,paralist,titlesec,fancyhdr,etoolbox}
\usepackage[
    colorlinks=true,
    linkcolor=blue,
    urlcolor=blue,
    citecolor=blue
]{hyperref}
\usepackage{doi}
\usepackage{amsthm}
\usepackage{orcidlink}

\theoremstyle{definition}
\newtheorem{theorem}{Theorem}[section]
\numberwithin{theorem}{section}
\newtheorem{definition}[theorem]{Definition}

\newtheorem{lemma}[theorem]{Lemma}
\newtheorem{proposition}[theorem]{Proposition}
\newtheorem{corollary}[theorem]{Corollary}

\titleformat{\section}[display]{\normalfont\huge\bfseries\centering}{\centering\chaptertitlename\thechapter}{10pt}{\Large}
\titlespacing*{\section}{0pt}{0ex}{0ex}

\titleformat{\section}
  {\normalfont\Small\centering} 
  {\thesection.}{1em}{\MakeUppercase} 

\titlespacing*{\section}{0pt}{1.5ex plus 1ex minus .2ex}{1.3ex plus .2ex}

\usepackage{enumerate}
\usepackage{enumitem}
\title[A Proof of Ramanujan's Classic $\pi$ Formula]
 {A Proof of Ramanujan's Classic $\pi$ Formula} 
 \author{Thang Pang Ern \orcidlink{0009-0005-3447-4188}}
\address{Department of Mathematics, National University of Singapore, 10 Lower Kent Ridge Road, Singapore 119076}
\email{thangpangern@u.nus.edu}

\author{Devandhira Wijaya Wangsa}
\address{Department of Mathematics, The Hong Kong University of Science and Technology, Clearwater Bay, Kowloon, Hong Kong}
\email{dwwangsa@connect.ust.hk}
\begin{document}
\maketitle
\begin{abstract}
In 1914, Ramanujan presented a collection of 17 elegant and rapidly converging formulae for $\pi$. Among these, one of the most celebrated is the following series: \begin{align*}
    \frac{1}{\pi}=\frac{2\sqrt{2}}{9801}\sum_{n=0}^{\infty}\frac{26390n+1103}{\left(n!\right)^4} \frac{\left(4n\right)!}{396^{4n}}.
\end{align*}
In this paper, we give a full proof of this classic formula using hypergeometric series and a special type of lattice sum due to Zucker and Robertson. We will also use some results by Dirichlet and Edwards in algebraic number theory.
\end{abstract}
\section{Introduction}
In 1914, Ramanujan provided a list of 17 formulae for $\pi$ \cite{ramanujan modular} without proofs. These identities are remarkable not only for their elegance, but also for their computational efficiency: truncating after only a modest number of terms already yields many correct digits of $\pi$. In fact, in November 1985, R. W. Gosper, Jr. used (\ref{eqn: what to prove ramanujan}) to calculate 17,526,100 digits of $\pi$, which at that time was a world record \cite{baruahberndtchan2009}. In 1987, the Borwein brothers gave proofs of all of Ramanujan's $\pi$ formulae \cite{pi and the agm, borwein pi proof 1987}, including the one of interest in this paper, which is \begin{align}\label{eqn: what to prove ramanujan}
\frac{1}{\pi}=\frac{2\sqrt{2}}{9801}\sum_{n=0}^{\infty}\frac{26390n+1103}{\left(n!\right)^4} \frac{\left(4n\right)!}{396^{4n}}.
\end{align} However, the computation of the Ramanujan \( g \)-invariant \( g_{58} \) was notably absent. The \(g\)-invariant plays a critical role in deriving Ramanujan-type formulae for \(\pi\), making its calculation particularly significant. This generalisation is known as a Ramanujan-Sato series. In general, such a series is of the form \[\frac{1}{\pi}=\sum_{n=0}^{\infty}s_n \frac{An+B}{C^n},\]
where $s_n$ is a sequence of integers satisfying a recurrence relation, and $A,B,C$ are modular forms.
\section{Elliptic Integrals, Theta Functions, and Hypergeometric Series}
\noindent \begin{definition}[complete elliptic integrals of the first and second kind]\label{definition: complete elliptic integrals of the first and second kind}
Let $k\in\left[0,1\right]$ denote the elliptic modulus, which is a quantity used in the study of elliptic functions and elliptic integrals. Then, let $k'=\sqrt{1-k^2}$ be the complementary modulus. Define the complete elliptic integrals of the first and second kind, $K\left(k\right)$ and $E\left(k\right)$ respectively, to be \begin{align}\label{eqn: K and E integrals}
        K\left(k\right)=\int_{0}^{\frac{\pi}{2}}\frac{1}{\sqrt{1-k^2\operatorname{sin}^2\theta}}\text{ }d\theta\quad\text{and}\quad 
        E\left(k\right)=\int_{0}^{\frac{\pi}{2}}\sqrt{1-k^2\operatorname{sin}^2\theta}\text{ }d\theta.
    \end{align}
\end{definition}
\begin{theorem}\label{derivatives of K and E}
The derivatives of $K$ and $E$ in Definition \ref{definition: complete elliptic integrals of the first and second kind} satisfy the differential equations 
\begin{align*}
    \frac{dK}{dk}=\frac{E-\left(k'\right)^2K}{k\left(k'\right)^2}\quad\text{and}\quad 
    \frac{dE}{dk}=\frac{E-K}{k}.
\end{align*}
\end{theorem}
\begin{proof}
We will only briefly discuss the first differential equation, which follows easily by Leibniz's rule of differentiating under the integral sign. By using the formula for $K$ in (\ref{eqn: K and E integrals}), we see that \begin{align*}
 K'\left(k\right)=\int_{0}^{\frac{\pi}{2}}\frac{\partial}{\partial k}\left(\frac{1}{\sqrt{1-k^2\operatorname{sin}^2\theta}}\right)\text{ }d\theta,
\end{align*}
and the first result follows. The other differential equation can be deduced similarly.
\end{proof}
\begin{definition}\label{definition of complementary integral}
    The complementary integrals are defined as follows:
    \begin{align*}
        K'\left(k\right)=K\left(k'\right)\quad\text{and}\quad E'\left(k\right)=E\left(k'\right).
    \end{align*}
\end{definition}
From Definition \ref{definition of complementary integral}, one can find a nice relationship between $K,K',E,E'$ known as Legendre's relation \cite{Jesus}. It states that \begin{align}\label{eqn: Legendre relation}
K\left(k\right)E'\left(k\right)+E\left(k\right)K'\left(k\right)-K\left(k\right)K'\left(k\right)=\frac{\pi}{2}.
\end{align}
\begin{definition}[one-variable Jacobi theta functions] Define the Jacobi theta functions of one variable to be as follows: \begin{align*}
        \theta_2\left(q\right)=\sum_{n\in\mathbb{Z}}q^{\left(n+\frac{1}{2}\right)^2}\quad\text{and}\quad \theta_3\left(q\right)=\sum_{n\in\mathbb{Z}}q^{n^2}\quad\text{and}\quad\theta_4\left(q\right)=\sum_{n\in\mathbb{Z}}\left(-1\right)^nq^{n^2}=\theta_3\left(-q\right)
    \end{align*}
where $q$, which satisfies $\left|q\right|<1$, is called the nome of the associated theta function.
\end{definition}
We can define the nome $q$ in terms of the elliptic modulus $k$ as follows:
    \begin{align*}
        q=\operatorname{exp}\left[-\pi\cdot \frac{K'\left(k\right)}{K\left(k\right)}\right].
    \end{align*} 
It is important to see $k$ as a function of $q$. As such, we have Theorem \ref{elliptic modulus and nome relation}, which states three equations relating the elliptic modulus $k$ to the nome $q$.
\begin{theorem}\label{elliptic modulus and nome relation} We have
    \begin{align*}
        k=\frac{\theta_2^2\left(q\right)}{\theta_3^2\left(q\right)}\quad\text{and}\quad k'=\frac{\theta_4^2\left(q\right)}{\theta_3^2\left(q\right)}\quad\text{and}\quad K\left(k\right)=\frac{\pi}{2}\theta_3^2\left(q\right).
    \end{align*}
\end{theorem}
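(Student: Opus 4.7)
The three identities are three facets of the Jacobi inversion theorem, and the plan is to prove them in an interlocking fashion. I would start from the algebraic identity
\[
\theta_3^4(q)=\theta_2^4(q)+\theta_4^4(q),
\]
a classical consequence of the Jacobi triple product. Granting (15), this immediately gives (16), since then $1-k^2=\theta_4^4(q)/\theta_3^4(q)$, and all theta values are positive for $q\in(0,1)$.

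To attack (17), I would first rewrite $K$ as a hypergeometric series: expanding $(1-k^2\sin^2\theta)^{-1/2}$ by the binomial theorem and integrating term-wise via Wallis's formula gives
\[
K(k)=\frac{\pi}{2}\,{}_2F_1\!\left(\tfrac12,\tfrac12;1;k^2\right)=\frac{\pi}{2}\sum_{n=0}^{\infty}\left(\frac{(1/2)_n}{n!}\right)^{\!2}k^{2n}.
\]
Writing $\widetilde{k}(q)=\theta_2^2(q)/\theta_3^2(q)$, the task for (17) then reduces to the identity
\[
\theta_3^2(q)={}_2F_1\!\left(\tfrac12,\tfrac12;1;\widetilde{k}^2(q)\right).
\]
I would verify this by showing that both sides, viewed as functions of $q$, satisfy the same Picard--Fuchs-type second-order linear ODE, with matching leading behaviour as $q\to 0^{+}$, where $\theta_3(q)\to 1$ and $\widetilde{k}(q)\to 0$. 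An alternative route is to match power-series coefficients after inverting the relation between $q$ and $\widetilde{k}^2$.

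The hardest step, and the main obstacle, is closing the circle by confirming that the $q$ produced from $\widetilde{k}$ through (14) really coincides with the $q$ one started with. This requires the Jacobi imaginary transformation
\[
\theta_3(e^{-\pi/t})=\sqrt{t}\,\theta_3(e^{-\pi t}),\qquad \theta_2(e^{-\pi/t})=\sqrt{t}\,\theta_4(e^{-\pi t}),
\]
which comes from Poisson summation applied to the Gaussian. Under $t\mapsto 1/t$ these transformations swap $\widetilde{k}\leftrightarrow\widetilde{k}'$, and combined with (17) this forces $K'(k)/K(k)=-\log(q)/\pi$, consistent with the definition (14) of the nome. The Poisson-summation input is the genuinely analytic content of the theorem; once it is available, the three identities fall out simultaneously.
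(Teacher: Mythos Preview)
The paper states this theorem without proof, so there is no argument of its own to compare your proposal against. On its own merits, your outline is a sound sketch of the classical route to Jacobi's inversion theorem: the identity $\theta_3^4=\theta_2^4+\theta_4^4$ to tie (15) and (16) together, the hypergeometric series for $K$ to reduce (17) to $\theta_3^2={}_2F_1\bigl(\tfrac12,\tfrac12;1;\widetilde{k}^{\,2}\bigr)$, and the imaginary transformation (via Poisson summation) to close the loop by showing $K'(\widetilde{k})/K(\widetilde{k})=-\log(q)/\pi$, so that $\widetilde{k}$ really is the modulus attached to $q$ through (14). The one step that, as written, is more of a wish than an argument is the Picard--Fuchs verification of $\theta_3^2={}_2F_1(\tfrac12,\tfrac12;1;\widetilde{k}^{\,2})$: you would need to produce the ODE in $q$ explicitly, check that $\theta_3^2$ satisfies it, and argue uniqueness from the behaviour at $q=0$. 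Your alternative of matching $q$-expansions after inverting $q\mapsto\widetilde{k}^{\,2}$ is in practice the cleaner route and is how standard references (e.g.\ Whittaker--Watson) handle it. You should also note, when closing the circle, that the map $k\mapsto\exp(-\pi K'(k)/K(k))$ is a bijection of $(0,1)$ onto itself, so that recovering the nome identifies $\widetilde{k}$ with $k$ uniquely.
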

We now proceed to discuss the Ramanujan $g$-invariant or class invariant (Definition \ref{ramanujan g invariant}), which we denote by $g_n$, or simply $g$. While a closely related concept, the $G$-invariant, also exists and shares similarities with the $g$-invariant, our focus here will remain exclusively on the latter.
\begin{definition}[Ramanujan $g$-invariant]\label{ramanujan g invariant}
    Define the Ramanujan $g$-invariant to be
    \begin{align*}
        g=\left(\frac{\left(k'\right)^2}{2k}\right)^{\frac{1}{12}}.
    \end{align*}
\end{definition}
Ramanujan gave the following formula for $g_n$ as an infinite product \cite{ramanujan modular}: \begin{align}\label{eqn: useful infinite product}
    \prod_{k=1,3,5,\ldots}\left(1-e^{-k\pi\sqrt{n}}\right)=2^{1/4}e^{-\pi\sqrt{n}/24}g_n.
\end{align}
This is particularly useful for evaluating the specific case where $n=58$. Actually, it is not surprising that $g_n$ can be represented by the infinite product in (\ref{eqn: useful infinite product}) as the elliptic modulus $k$ can be expressed in terms of $g$, i.e. 
\begin{align*}
        k=g^6\sqrt{g^{12}+g^{-12}}-g^{12}.
    \end{align*}
\begin{definition}[singular value functions]
    Define \begin{align}\label{eqn: singular value function of the first kind}
        \lambda^\ast\left(r\right)=k\left(\operatorname{exp}\left(-\pi\sqrt{r}\right)\right)\quad \text{where }r>0
    \end{align}
    to be the singular value function of the first kind. Also, the singular value function of the second kind, $\alpha$, is given by the following formula:
    \begin{align}\label{eqn: definition of alpha}
        \alpha\left(r\right)=\frac{E'\left(k\right)}{K\left(k\right)}-\frac{\pi}{4\left[K\left(k\right)\right]^2}\quad\text{where }r>0.
    \end{align}
\end{definition}
\begin{theorem} Let $\alpha$ denote the singular value function of the second kind as in (\ref{eqn: definition of alpha}). Then, the following holds:
    \begin{align*}
        \lim_{r\rightarrow\infty}\alpha\left(r\right)=\frac{1}{\pi}.
    \end{align*}
\end{theorem}
\begin{proof}
    Since $\displaystyle \lim_{r\rightarrow\infty}\lambda^\ast\left(r\right)=0$, then \begin{align*}
        0<\alpha\left(r\right)-\frac{1}{\pi}\le \sqrt{r}\left[\lambda^\ast\left(r\right)\right]^2\le \frac{16\sqrt{r}}{e^{\pi\sqrt{r}}}.
    \end{align*}
    and the result follows by the squeeze theorem.
\end{proof}
In Theorem \ref{formula for alpha}, we present a different formula for $\alpha\left(r\right)$ only in terms of the two complete elliptic integrals $K$ and $E$.
\begin{theorem}\label{formula for alpha}
    \begin{align*}
        \alpha\left(r\right)=\frac{\pi}{4\left[K\left(k\right)\right]^2}-\sqrt{r}\left[\frac{E\left(k\right)}{K\left(k\right)}-1\right].
    \end{align*}
\end{theorem}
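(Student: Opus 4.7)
The plan is to connect the two expressions for $\alpha(r)$ by combining Legendre's relation (Theorem \ref{legendre's relation}) with the special value of the ratio $K'/K$ that is forced by $k = \lambda^{*}(r)$.

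First I would exploit the defining property of the singular value function. Since $\lambda^{*}(r) = k\!\left(e^{-\pi\sqrt{r}}\right)$, the relevant nome is $q = e^{-\pi\sqrt{r}}$. Comparing this with the general formula $q = \exp\!\left[-\pi K'(k)/K(k)\right]$ from (14) and taking logarithms, I immediately obtain
\begin{align}
\frac{K'(k)}{K(k)} = \sqrt{r}, \qquad \text{i.e.}\qquad K'(k) = \sqrt{r}\,K(k).
\end{align}
This is the key identity that lets us eliminate the complementary integrals in favour of $r$.

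Next I would plug this into Legendre's relation
\begin{align}
K(k)E'(k) + E(k)K'(k) - K(k)K'(k) = \frac{\pi}{2}.
\end{align}
Replacing $K'(k)$ by $\sqrt{r}\,K(k)$ and dividing through by $K(k)$ gives
\begin{align}
\frac{E'(k)}{K(k)} + \sqrt{r}\left[\frac{E(k)}{K(k)} - 1\right] = \frac{\pi}{2\,[K(k)]^{2}}.
\end{align}
Rearranging to isolate $E'(k)/K(k)$ and subtracting $\pi/(4[K(k)]^{2})$ from both sides produces exactly the defining expression (22) of $\alpha(r)$ on the left, and the claimed right-hand side of (25):
\begin{align}
\alpha(r) \;=\; \frac{E'(k)}{K(k)} - \frac{\pi}{4[K(k)]^{2}} \;=\; \frac{\pi}{4[K(k)]^{2}} - \sqrt{r}\left[\frac{E(k)}{K(k)} - 1\right].
\end{align}

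There is essentially no serious obstacle here: the whole argument is a one-line algebraic rearrangement of Legendre's relation once one has recognized that the hypothesis $k = \lambda^{*}(r)$ converts the transcendental quantity $K'/K$ into the algebraic $\sqrt{r}$. The only place to be careful is signs and bookkeeping when dividing by $K(k)$ and reorganizing the $\pi/(2[K]^{2})$ term into $\pi/(4[K]^{2}) + \pi/(4[K]^{2})$ so that one copy cancels against the $-\pi/(4[K]^{2})$ coming from the definition of $\alpha$.
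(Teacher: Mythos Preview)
Your proposal is correct and follows essentially the same route as the paper: both arguments combine Legendre's relation with the identity $K'(k)/K(k)=\sqrt{r}$ (a consequence of $k=\lambda^{*}(r)$ via the nome formula) and then rearrange algebraically. The only cosmetic slip is that you say ``dividing through by $K(k)$'' when in fact you divide by $K(k)^{2}$ to reach the displayed line; the computation itself is fine.
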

\begin{proof}
We have \begin{align}\label{eqn: long formula for alpha}
    \alpha \left( r \right)=\frac{{E}'\left( k \right)}{K\left( k \right)}-\frac{\pi }{4{{\left[ K\left( k \right) \right]}^{2}}}=\frac{4{E}'\left( k \right)K\left( k \right)-\pi }{4{{\left[ K\left( k \right) \right]}^{2}}}=\frac{4K\left( k \right){K}'\left( k \right)-4E\left( k \right){K}'\left( k \right)-\pi }{4{{\left[ K\left( k \right) \right]}^{2}}}
\end{align}
where the last equality follows from Legendre's relation (\ref{eqn: Legendre relation}). From (\ref{eqn: singular value function of the first kind}), one can deduce that \begin{align}\label{eqn: K'/K}
    \frac{K'\left(\lambda^\ast\left(r\right)\right)}{K\left(\lambda^\ast\left(r\right)\right)}=\sqrt{r}.
\end{align}
The result follows by plugging (\ref{eqn: K'/K}) into (\ref{eqn: long formula for alpha}).
\end{proof}
We shall show that $\alpha$ has a direct connection with $1/\pi$ in Theorem \ref{theorem alpha connection with 1/pi}. Also, it is a well-known result that for any positive rational number $r$, $\alpha\left(r\right)$ is an algebraic number.
\begin{theorem}\label{theorem alpha connection with 1/pi} Let $k=\lambda^\ast\left(r\right)$ denote the singular value function of the first kind. Then,
    \begin{align*}
        \frac{1}{\pi}=\sqrt{r}k\left(k'\right)^2\left[\left(\frac{2}{\pi}\right)^2 K\left(k\right)\frac{dK}{dk}\right]+\left[\alpha\left(r\right)-\sqrt{r}k^2\right]\left[\frac{2}{\pi}K\left(k\right)\right]^2.
    \end{align*}
\end{theorem}

\begin{definition}[hypergeometric series]
    The Gaussian hypergeometric function $_{2}F_1$ is given by \begin{align*}
        _2F_1\left(a,b;c;z\right)=\sum_{n=0}^{\infty}\frac{\left(a\right)_n\left(b\right)_n}{\left(c\right)_n} \frac{z^n}{n!}\quad\text{where }a,b,c \in\mathbb{C}.
    \end{align*}
    We also define $_3F_2$ to be 
    \begin{align*}
        _3F_2\left(a,b,c; \alpha, \beta ;z\right)=\sum_{n=0}^{\infty}\frac{\left(a\right)_n\left(b\right)_n \left(c\right)_n}{\left(\alpha \right)_n\left(\beta \right)_n}\frac{z^n}{n!}\quad\text{where }a,b,c,\alpha,\beta \in\mathbb{C}.
    \end{align*}
\end{definition}
\begin{proposition}
    For $k\in \left[0,\frac{1}{\sqrt{2}}\right]$, we have the following identities: \begin{align*}
        \frac{2}{\pi}K\left(k\right)={_2F_1}\left(\frac{1}{4},\frac{1}{4};1;\left(2kk'\right)^2\right)\quad\text{and}\quad 
        \left[\frac{2}{\pi}K\left(k\right)\right]^2={_3F_2}\left(\frac{1}{2},\frac{1}{2},\frac{1}{2};1,1;\left(2kk'\right)^2\right).
    \end{align*}
\end{proposition}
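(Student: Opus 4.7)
The plan is to derive both identities from the classical series representation
\[
\frac{2}{\pi}K(k) = {}_{2}F_{1}\!\left(\tfrac{1}{2},\tfrac{1}{2};1;k^{2}\right),
\]
which itself is obtained by expanding $(1-k^{2}\sin^{2}\theta)^{-1/2}$ by the binomial theorem inside the integral (3), integrating termwise, and applying the Wallis evaluation $\int_{0}^{\pi/2}\sin^{2n}\theta\,d\theta = \tfrac{\pi}{2}\,(1/2)_{n}/n!$. The remainder of the argument is two applications of standard hypergeometric machinery.

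For identity (31), I would invoke the Gauss--Kummer quadratic transformation
\[
{}_{2}F_{1}\!\left(a,b;\,a+b+\tfrac{1}{2};\,4z(1-z)\right) = {}_{2}F_{1}\!\left(2a,2b;\,a+b+\tfrac{1}{2};\,z\right),
\]
valid for $|z|\le \tfrac{1}{2}$. Specialising to $a=b=\tfrac{1}{4}$ (so that $a+b+\tfrac{1}{2}=1$ and $2a=2b=\tfrac{1}{2}$) and $z=k^{2}$ identifies ${}_{2}F_{1}(\tfrac{1}{4},\tfrac{1}{4};1;4k^{2}(1-k^{2}))$ with $(2/\pi)K(k)$; since $4k^{2}(1-k^{2})=(2kk')^{2}$ and the hypothesis $k\le 1/\sqrt{2}$ is exactly what keeps $k^{2}\le \tfrac{1}{2}$ (so the quadratic substitution remains in its domain of validity), this is (31). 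Identity (32) then follows by squaring (31) and applying Clausen's product formula
\[
\left[{}_{2}F_{1}\!\left(a,b;\,a+b+\tfrac{1}{2};\,z\right)\right]^{2} = {}_{3}F_{2}\!\left(2a,\,2b,\,a+b;\;a+b+\tfrac{1}{2},\,2a+2b;\,z\right)
\]
with the same choice $a=b=\tfrac{1}{4}$: the three numerator parameters collapse to $\bigl(\tfrac{1}{2},\tfrac{1}{2},\tfrac{1}{2}\bigr)$ and the two denominator parameters to $(1,1)$, so the right-hand side becomes precisely ${}_{3}F_{2}(\tfrac{1}{2},\tfrac{1}{2},\tfrac{1}{2};1,1;(2kk')^{2})$.

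The main obstacle is really Clausen's formula itself, which is the genuine nontrivial input to the argument; everything else is bookkeeping of parameters. If one did not wish to invoke it as a black box, the cleanest route is to expand both sides as power series in $z$ and reduce the resulting coefficient identity to the Pfaff--Saalschütz $_{3}F_{2}$ summation. The only other subtlety is convergence at the boundary: the restriction $k\in[0,1/\sqrt{2}]$ places $(2kk')^{2}\in[0,1]$, and at the endpoint $k=1/\sqrt{2}$ the hypergeometric argument equals $1$; both series still converge there because $c-a-b = \tfrac{1}{2}>0$ in each case, so the equalities extend to $k=1/\sqrt{2}$ by Abel's theorem and the continuity of $K$.
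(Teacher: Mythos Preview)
Your proposal is correct and follows essentially the same route as the paper: derive $\tfrac{2}{\pi}K(k)={}_{2}F_{1}(\tfrac{1}{2},\tfrac{1}{2};1;k^{2})$ from the integral, apply the Gauss--Kummer quadratic transformation with $a=b=\tfrac{1}{4}$ and $z=k^{2}$ to obtain (31), and then square and invoke Clausen's formula with the same parameter choice and $z=(2kk')^{2}$ to obtain (32). Your discussion of the range restriction $k\le 1/\sqrt{2}$ and endpoint convergence is a welcome addition that the paper omits.
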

\begin{proof}
For the first identity, recall Kummer's identity, which states that 
\begin{align*}
    {_2F_1}\left(2a,2b;a+b+\frac{1}{2};z\right)={_2F_1}\left(a,b;a+b+\frac{1}{2};4z\left(1-z\right)\right).
\end{align*}
Set $z=k^2$, so $1-z=1-k^2=\left(k'\right)^2$. Also, set $a=b=1/4$, so we obtain the following (we will be working with the $q$-shifted factorial here although it will be formally defined in Definition \ref{q shifted factorial}): \begin{align*}
{_2F_1}\left(\frac{1}{4},\frac{1}{4};1;\left(2kk'\right)^2\right)={_2F_1}\left(\frac{1}{2},\frac{1}{2};1;k^2\right)=\sum_{n=0}^{\infty}\frac{\left(\frac{1}{2}\right)_n\left(\frac{1}{2}\right)_n}{\left(1\right)_n}\cdot \frac{k^{2n}}{n!}=\sum_{n=0}^{\infty}\frac{\left(\left(2n\right)!\right)^2}{16^n\left(n!\right)^4}\cdot k^{2n}
\end{align*}
By considering the series expansion of $K$ in (\ref{eqn: K and E integrals}), one can deduce that the first identity holds.

For the second identity, we use the following identity by Clausen \cite{Jesus}: \begin{align*}
\left({_2F_1}\left(a,b;a+b+\frac{1}{2};z\right)\right)^2={_3F_2}\left(2a,a+b,2b;a+b+\frac{1}{2},2a+2b;z\right)
\end{align*}
Again, we set $z=\left(2kk'\right)^2$, $a=b=1/4$, which yields the result.
\end{proof}
\begin{corollary}\label{seriesforKintermsofhypergeometric}
For $0\le k\le \frac{1}{\sqrt{2}}$, we have the following:
\begin{align}\label{eqn: 2/pi K(k) ^2}
    \left[\frac{2}{\pi}K\left(k\right)\right]^2=\frac{1}{k^2}\cdot {_3F_2}\left(\frac{1}{4},\frac{1}{2},\frac{3}{4};1,1;\left(\frac{2}{g^{12}+g^{-12}}\right)^2\right)
\end{align}
\end{corollary}
We have provided a series for $\left[\frac{2}{\pi}K\left(k\right)\right]^2$ in terms of Ramanujan's $g$-invariant. We see that the formula obtained in Corollary \ref{seriesforKintermsofhypergeometric} is of the form \begin{align*}
    \left[\frac{2}{\pi}K\left(k\right)\right]^2=m\left(k\right)F\left(\varphi\left(k\right)\right)=mF,
\end{align*}
where $m$ and $\varphi$ are algebraic numbers, and $F\left(\varphi\right)$ is a hypergeometric series which is given by the expansion 
\begin{align*}
    F\left(\varphi\left(k\right)\right)=\sum_{n=0}^{\infty}a_n\varphi^n\quad\text{where }a_n\in\mathbb{Q}\text{ for all }n\in\mathbb{N}.
\end{align*}
Differentiating both sides of (\ref{eqn: 2/pi K(k) ^2}), we obtain 
\begin{align*}
    \left(\frac{2}{\pi}\right)^2 K\frac{dK}{dk}=\frac{1}{2}\left(\frac{dm}{dk}F+m\frac{d\varphi}{dk}\frac{dF}{d\varphi}\right).
\end{align*}
Note that 
\begin{align*}
    \frac{dF}{d\varphi}=\sum_{n=1}^{\infty}na_n\varphi^{n-1}=\sum_{n=0}^{\infty}\left(n+1\right)a_{n+1}\varphi^n=\frac{1}{\varphi}\sum_{n=0}^\infty \left(n+1\right)a_{n+1}\varphi^{n+1}
\end{align*}
By Theorem \ref{theorem alpha connection with 1/pi}, this yields 
\begin{align*}
  \frac{1}{\pi }&=\sqrt{r}k{{\left( {{k}'} \right)}^{2}}\left[ \frac{1}{2}\left( \frac{dm}{dk}F+m\frac{d\varphi }{dk}\frac{dF}{d\varphi } \right) \right]+\left[ \alpha \left( r \right)-\sqrt{r}{{k}^{2}} \right]mF \\ 
 & =\sqrt{r}k{{\left( {{k}'} \right)}^{2}}\frac{F}{2}\frac{dm}{dk}+\sqrt{r}k{{\left( {{k}'} \right)}^{2}}\frac{m}{2}\frac{d\varphi }{dk}\frac{dF}{d\varphi } +\left[ \alpha \left( r \right)-\sqrt{r}{{k}^{2}} \right]mF 
\end{align*}
so \begin{align}\label{eqn: 1/pi brace}
\frac{1}{\pi}=\sum\limits_{n=0}^{\infty }{{{a}_{n}}\left\{ \frac{1}{2}\sqrt{r}k{{\left( {{k}'} \right)}^{2}}\frac{dm}{dk}+\left[ \alpha \left( r \right)-\sqrt{r}{{k}^{2}} \right]m+\frac{mn}{2\varphi }\sqrt{r}k{{\left( {{k}'} \right)}^{2}}\frac{d\varphi }{dk} \right\}{{\varphi }^{n}}}.
\end{align}
We see that the braced term of in the expansion of $1/\pi$ in (\ref{eqn: 1/pi brace}) is of the form $A+nB$, where $A$ and $B$ are algebraic numbers.

By setting \begin{align}\label{eqn: x_N formula}
    x_N=\frac{2}{g_N^{12}+g_{N}^{-12}}=\frac{4k_N\left(k_N'\right)^2}{\left(1+k_N^2\right)^2},
\end{align}
we deduce the following series in $x_N$, which also appears in \cite{pi and the agm}:
\begin{align}\label{eqn: 1/pi in terms of everything}
    \frac{1}{\pi}=\sum_{n=0}^{\infty}\frac{\left(\frac{1}{4}\right)_n\left(\frac{1}{2}\right)_n\left(\frac{3}{4}\right)_n}{\left(n!\right)^3}\left[\frac{\alpha\left(N\right)}{x_N\left(1+k_N^2\right)}-\frac{\sqrt{N}}{4g_N^{12}}+n\sqrt{N}\cdot \frac{g_N^{12}-g_N^{-12}}{2}\right]x_{N}^{2n+1}
\end{align}
In (\ref{eqn: 1/pi in terms of everything}), we have the quantities $\left(\frac{1}{4}\right)_n$, $\left(\frac{1}{2}\right)_n$, and $\left(\frac{3}{4}\right)_n$. We shall obtain alternative expressions for these in terms of more familiar-looking ones in Lemma \ref{lemma hypergeometric}. In the study of hypergeometric series, expressions like these are said to be defined by the $q$-shifted factorial (Definition \ref{q shifted factorial}).
\begin{definition}[$q$-shifted factorial]\label{q shifted factorial}
Define \begin{align*}
    \left(q\right)_n=\begin{cases}
        1 & \text{if }n=0;\\
        q\left(q+1\right)\cdots\left(q+n-1\right) & \text{if }n>0.
    \end{cases}
\end{align*}
\end{definition}
\begin{lemma}\label{lemma hypergeometric}
\begin{align*}
    \left(\frac{1}{4}\right)_n\left(\frac{1}{2}\right)_n\left(\frac{3}{4}\right)_n=\frac{1}{256^n} \frac{\left(4n\right)!}{n!}.
\end{align*}
\end{lemma}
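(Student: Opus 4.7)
The plan is to expand each of the three Pochhammer symbols $\left(\frac{1}{4}\right)_n$, $\left(\frac{1}{2}\right)_n$, $\left(\frac{3}{4}\right)_n$ via Definition \ref{q pocchammer symbol} into an explicit product of integers divided by a power of $2$. Writing
\[
\left(\tfrac{1}{4}\right)_n = \frac{1}{4^n}\prod_{j=0}^{n-1}(4j+1),\quad \left(\tfrac{1}{2}\right)_n = \frac{1}{2^n}\prod_{j=0}^{n-1}(2j+1),\quad \left(\tfrac{3}{4}\right)_n = \frac{1}{4^n}\prod_{j=0}^{n-1}(4j+3),
\]
the denominators accumulate to $4^n \cdot 2^n \cdot 4^n = 32^n$, so the problem reduces to identifying the combined numerator.

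The key combinatorial observation is that the values $\{4j+1 : 0\le j\le n-1\} \cup \{4j+3 : 0\le j\le n-1\}$ enumerate precisely the odd integers in $[1, 4n-1]$. Applying the standard identity $\prod_{j=0}^{m-1}(2j+1) = (2m)!/(2^m m!)$ once with $m = 2n$ (for the merged $1/4$-and-$3/4$ product) and once with $m = n$ (for the $1/2$ factor), the numerator telescopes as
\[
\frac{(4n)!}{4^n (2n)!}\cdot \frac{(2n)!}{2^n n!} = \frac{(4n)!}{8^n n!}.
\]
Multiplying by the $1/32^n$ denominator from the first step and using $32^n\cdot 8^n = 256^n$ yields the claimed identity.

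I do not anticipate any genuine obstacle: the proof is a mechanical rearrangement whose only delicate point is the bookkeeping of the various powers of $2$. An alternative, slicker route would apply the Gauss multiplication formula $\prod_{k=0}^{3}\Gamma(z+k/4) = (2\pi)^{3/2} 4^{1/2-4z}\Gamma(4z)$ at $z = 1/4$ together with $(a)_n = \Gamma(a+n)/\Gamma(a)$, collapsing the computation into one line; however, the elementary odd-integer factorization above is self-contained and avoids invoking machinery beyond what the paper already uses.
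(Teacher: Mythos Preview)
Your proposal is correct and follows essentially the same approach as the paper's own proof: expand each Pochhammer symbol as a product over a power of $4$ or $2$, merge the $(1/4)_n$ and $(3/4)_n$ factors into the product of all odd integers up to $4n-1$, and then apply the identity $\prod_{j=0}^{m-1}(2j+1) = (2m)!/(2^m m!)$ with $m=2n$ and $m=n$ to collapse everything to $(4n)!/(256^n\, n!)$. The only addition you make is the remark about the Gauss multiplication formula as an alternative route, which the paper does not mention.
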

\begin{proof}
We have \begin{align*}
    \left(\frac{1}{4}\right)_n&=\left(\frac{1}{4}\right)\left(\frac{5}{4}\right)\left(\frac{9}{4}\right)\cdots \left(\frac{4n-3}{4}\right)=\frac{1\cdot 5 \cdot 9 \cdots \left(4n-3\right)}{4^n}
\end{align*}
and \begin{align*}
    \left(\frac{1}{2}\right)_n=\left(\frac{1}{2}\right)\left(\frac{3}{2}\right)\left(\frac{5}{2}\right)\cdots\left(\frac{2n-1}{2}\right)=\frac{1\cdot 3\cdot 5\cdots  \left(2n-1\right)}{2^n}
\end{align*}
as well as \begin{align*}
\left(\frac{3}{4}\right)_n=\left(\frac{3}{4}\right)\left(\frac{7}{4}\right)\left(\frac{11}{4}\right)\cdots \left(\frac{4n-1}{4}\right)=\frac{3\cdot 7 \cdot 11 \cdots\left(4n-1\right)}{4^n}.
\end{align*}
Putting everything together, we have
\begin{align*}
\left(\frac{1}{4}\right)_n\left(\frac{1}{2}\right)_n\left(\frac{3}{4}\right)_n=\frac{1}{32^n} \frac{\left(4n\right)!}{4^n\left(2n\right)!} \frac{\left(2n\right)!}{2^nn!}=\frac{1}{256^n} \frac{\left(4n\right)!}{n!}.
\end{align*}
\end{proof}
\section{Computation of the Ramanujan $g$-invariant}
One of the most difficult parts of deducing (\ref{eqn: what to prove ramanujan}) is computing the exact value of the $g$-invariant $g_{58}$. Once we deduce it, we can plug it into (\ref{eqn: 1/pi in terms of everything}). As we would point out in due course, Borwein, Borwein and Bailey used an iterative algorithm to deduce the value of $\alpha\left(58\right)$ \cite{Borwein Pi Alpha}.

First, recall the hyperbolic cosecant function, denoted by $\operatorname{csch}z$. It is defined to be \begin{align*}
    \operatorname{csch}z=\frac{1}{\operatorname{sinh}z}=\frac{2}{e^z-e^{-z}}=2\sum_{n=1}^{\infty}e^{-\left(2n-1\right)z}.
\end{align*}
The geometric series expansion is valid for $\operatorname{Re} \left(z\right)>0$ and it is particularly important. Following this, Wong \cite{wong} showed that there is a nice connection between $\csc z$ and a lattice sum (Lemma \ref{wong's lemma}).
\begin{lemma}[Wong \cite{wong}]\label{wong's lemma}
Let $r$ be a positive rational number. Then, \begin{align}
\label{log formula}
\sideset{}{'}\sum_{\left(m,n\right)\in\mathbb Z^2}\frac{\left(-1\right)^m}{m^2+rn^2}=-\frac{\pi}{\sqrt{r}}\operatorname{log}\left(2g_r^4\right),
\end{align}
where for any $d\in\mathbb{N}$, $\Sigma'$ denotes the sum over $\mathbb{Z}^d$ with the origin omitted.
\end{lemma}
\begin{proof}
Recall Ramanujan's formula for $g_r$ in (\ref{eqn: useful infinite product}). Taking logarithms on both sides, then multiplying by 4 yields  \begin{align}\label{eqn: 4 times blah blah}
    4\sum_{m=1,3,5,\ldots}\operatorname{log}\left(1-e^{-m\pi \sqrt{r}}\right)=\operatorname{log}2-\frac{\pi\sqrt{r}}{6}+4\operatorname{log} g_r = -\frac{\pi\sqrt{r}}{6} +\log\left(2g_r^4\right) .
\end{align}
As $e^{-m\pi \sqrt{r}}\in \left(0,1\right)$, we may rewrite the logarithmic term in (\ref{eqn: 4 times blah blah}) as
\begin{align}\label{eqn: log expansion}
    \operatorname{log}\left(1-e^{-m\pi \sqrt{r}}\right) = -\sum_{n=1}^{\infty} \frac{1}{n} e^{-k n \pi \sqrt{r}}.
\end{align}
Switching the order of summation in (\ref{eqn: 4 times blah blah}) after replacing the logarithmic term with (\ref{eqn: log expansion}) yields
\begin{align}\label{eqn: double sum single sum}
    -4\sum_{n=1}^\infty \sum_{m=1}^\infty \frac{1}{n} e^{-(2m-1)n \pi \sqrt{r}} = -2\sum_{n=1}^\infty \frac{\operatorname{csch}(n\pi \sqrt{r})}{n}.
\end{align}
Note that $\pi \operatorname{csch}\left(\pi z\right)$ can be expressed as the following infinite series, which can be deduced using a variety of means such as using Fourier series or the residue theorem in complex analysis: \begin{align}\label{eqn: pi csch piz}
    \pi \operatorname{csch}\left(\pi z\right)= \frac{1}{z}+\sum_{m=1}^{\infty}\frac{2z\left(-1\right)^m}{z^2+m^2}.
\end{align}
Letting $z=n\sqrt{r}$ in (\ref{eqn: pi csch piz}) and applying it to (\ref{eqn: double sum single sum}) then gives
\begin{align*}
    -2\sum_{n=1}^\infty \frac{\operatorname{csch}(n\pi \sqrt{r})}{n} &= -\frac{2}{\pi}\sum_{n=1}^\infty \left( \frac{1}{n^2 \sqrt{r}} + \sum_{m=1}^\infty \frac{2(-1)^m\sqrt{r}}{m^2+rn^2} \right)\\&= -\frac{\pi}{3\sqrt{r}} - \frac{2}{\pi} \sum_{n=1}^\infty\sum_{m=1}^\infty \frac{2(-1)^m\sqrt{r}}{m^2+rn^2}.
\end{align*}
It now suffices to prove that
\begin{align}
\label{ultima}
    -\frac{\pi}{3\sqrt{r}} - \frac{2}{\pi} \sum_{n=1}^\infty\sum_{m=1}^\infty \frac{2(-1)^m\sqrt{r}}{m^2+rn^2} = -\frac{\pi\sqrt{r}}{6} +\log(2g_r^4) .
\end{align}
We proceed by isolating the logarithmic term in (\ref{ultima}), in which we obtain
\begin{align*}
    4\sum_{n=1}^\infty\sum_{m=1}^\infty \frac{(-1)^m}{m^2+rn^2} +\frac{\pi^2}{3r}- \frac{\pi^2}{6} =   - \frac{\pi}{\sqrt{r}}\log(2g_r^4).
\end{align*}
There are a few things to notice here. That is, we have the following sums
\begin{align*}
    \sideset{}{'}\sum_{n\in \mathbb Z} \frac{1}{rn^2} = \frac{\pi^2}{3r}\quad\text{and}\quad 
    \sideset{}{'}\sum_{m\in \mathbb Z} \frac{(-1)^m}{m^2} &= -\frac{\pi^2}{6},
\end{align*}
and also
\begin{align*}
    4\sum_{n=1}^\infty\sum_{m=1}^\infty \frac{(-1)^m}{m^2+rn^2} = \sideset{}{'}\sum_{\left(m,n\right)\in\mathbb Z^2} \frac{(-1)^m}{m^2+rn^2}.
\end{align*}
So, (\ref{eqn: 4 times blah blah}) thus becomes
\begin{align*}
    \sideset{}{'}\sum_{\left(m,n\right)\in\mathbb Z^2} \frac{(-1)^m}{m^2+rn^2} + \sideset{}{'}\sum_{n\in \mathbb Z} \frac{1}{rn^2} + \sideset{}{'}\sum_{m\in \mathbb Z} \frac{(-1)^m}{m^2} = -\frac{\pi}{\sqrt{r}} \log(2g_r^4). 
\end{align*}
The left side is precisely the summation over $\mathbb Z^2$ with the origin omitted. Hence, the lemma is proven.
\end{proof}
From (\ref{log formula}), setting $r=58$, the equation of interest is \begin{align*}
    \sideset{}{'}\sum_{\left(m,n\right)\in \mathbb Z^2}\frac{\left(-1\right)^{m+1}}{m^2+58n^2}=\frac{\pi}{\sqrt{58}}\operatorname{log}\left(2g_{58}^4\right).
\end{align*}
J. Borwein, et al. described a way to decompose this double zeta sum in terms of $L$-series \cite{lattice sums}. Letting $\left(\frac{d}{n}\right)$ denote the Kronecker symbol, define \begin{align}\label{eqn: l series dirichlet}
    L_d\left(s\right)=\sum_{n=1}^{\infty}\left(\frac{d}{n}\right)\frac{1}{n^s}.
\end{align}
The Riemann zeta function is a special case of (\ref{eqn: l series dirichlet}). In particular, when $\left(\frac{d}{n}\right)=1$ yields $\zeta\left(s\right)$. Actually, the sum in (\ref{eqn: zucker robertson sum}) is mainly due to Zucker and Robertson \cite{zucker}, where they also provided a way to visualise lattice sums:  \begin{align}\label{eqn: zucker robertson sum}
    \sideset{}{'}\sum_{\left(m,n\right)\in \mathbb Z^2}\frac{\left(-1\right)^{m+1}}{\left(m^2+2Pn^2\right)^s}=\frac{\pi}{\sqrt{2P}}\operatorname{log}2+2^{1-t}\sum_{\mu\mid P}\left(1-\left(\frac{2}{\mu}\right)2^{1-s}\right)L_{\pm \mu}L_{\mp 8P/\mu},
\end{align}
where $L_{\mu}$ is taken such that $\mu\equiv \pm 1\text{ }\left(\operatorname{mod}4\right)$. Also, the $P$'s are square-free numbers which are congruent to $1 \pmod 4$ with $t$ prime factors. By defining $S_1=S_1\left(a,b,c:s\right)$ to be the series \cite{zucker} \begin{align*}
    S_1\left(a,b,c;s\right)=\sideset{}{'}\sum_{\left(m,n\right)\in \mathbb Z^2}\frac{\left(-1\right)^m}{\left(am^2+bmn+cn^2\right)^s},
\end{align*}
it remains to evaluate $S_1\left(1,0,58:1\right)$. As such, we choose $P=29$ so that (\ref{eqn: zucker robertson sum}) becomes \begin{align}\label{eqn: edwards l series}
\sideset{}{'}\sum_{\left(m,n\right)\in \mathbb Z^2}\frac{\left(-1\right)^{m+1}}{m^2+58n^2}=\frac{\pi}{\sqrt{58}}\operatorname{log}2+4L_{-8}\left(1\right)L_{29}\left(1\right).
\end{align}
Hence,
\begin{align}\label{eqn: before edwards}
    \frac{\pi}{\sqrt{58}}\operatorname{log}\left(g_{58}^4\right)=4L_{-8}\left(1\right)L_{29}\left(1\right).
\end{align}
See Theorem \ref{g invariant theorem} for a proof of (\ref{eqn: before edwards}).

As we would see in Theorem \ref{edwards main theorem on calculating l series}, Edwards \cite{edwards} provides a method to compute the values of the mentioned $L$-series in (\ref{eqn: edwards l series}). In particular, we need to compute some Dirichlet characters. We first define what a Dirichlet character is (Definition \ref{dirichlet character definition}) \cite{nist-handbook}.
\begin{definition}[Dirichlet character]\label{dirichlet character definition} Let $m>1$ be a given integer. Then, a complex-valued arithmetic function $\chi\left(n\right)$ is a Dirichlet character modulo $m$ if it is completely multiplicative, periodic with period $m$, and vanishes when $\operatorname{gcd}\left(n,m\right)>1$. In other words, Dirichlet characters modulo $m$ satisfy the three following conditions:
\begin{enumerate}[label=\textbf{(\roman*)}]
    \item \(\chi\left(mn\right) = \chi\left(m\right)\chi\left(n\right)\) and $\chi\left(1\right)=1$, i.e. $\chi$ is completely multiplicative
    \item \(\chi\left(n + m\right) = \chi\left(n\right)\), i.e. \(\chi\) is periodic with period \(m\)
    \item $\chi\left(n\right)=0$ if $\operatorname{gcd}\left(n,m\right)>1$
\end{enumerate}
The simplest possible character, called the principal character, denoted by \(\chi_0\), exists for all moduli, and is defined as follows:
\begin{align*}
\chi_1\left(n\right) = 
\begin{cases} 
  0 & \text{if } \gcd\left(n,m\right) > 1; \\
  1 & \text{if } \gcd\left(n,m\right) = 1.
\end{cases}
\end{align*}
\end{definition}
We are now in position to compute the values of the $L$-series $L_d\left(1\right)$. Note that we will discuss both positive and negative values of $d$ since we have $d=-8$ and $d=29$ as shown in (\ref{eqn: before edwards}). Theorem \ref{edwards main theorem on calculating l series} is due to Edwards \cite{edwards}.
\begin{theorem}[Edwards]\label{edwards main theorem on calculating l series}
Let $m=\left|4d\right|$ if $d$ is not congruent to 1 mod 4 and $m=\left|d\right|$ if $d\equiv 1\text{ }\left(\operatorname{mod}4\right)$. 

If $d<0$, then \begin{align*}
    L_d\left(1\right)=\frac{\pi}{m^{3/2}}\left|1+\sum_{k=2}^{m}k\chi\left(k\right)\right|.
\end{align*}
On the other hand, if $d>0$, then \begin{align}\label{eqn: edwards complicated}
    L_d\left(1\right)=\pm \frac{1}{\sqrt{m}}\operatorname{log}\left|\frac{\displaystyle\prod_{\substack{0<k<m \\ \chi(k) = 1 }}  \operatorname{sin}\left(\frac{k\pi}{m}\right)}{\displaystyle\prod_{\substack{0<k<m \\ \chi(k) = -1 }} \operatorname{sin}\left(\frac{k\pi}{m}\right)}\right|.
\end{align}
\end{theorem}
However, (\ref{eqn: edwards complicated}) is generally difficult to evaluate and it would be easier to use Dirichlet's class number formula (Theorem \ref{class number formula}) \cite{edwards}.
\begin{theorem}[Dirichlet's class number formula]\label{class number formula}
Let $E$ be the fundamental unit with norm 1. That is,
\begin{align*}
    E=\begin{cases}
        \varepsilon& \text{if } N(\varepsilon)=1\\
        \varepsilon^2& \text{if } N(\varepsilon)=-1
    \end{cases}.
\end{align*}
Also, let $h$ denote the class number of the real quadratic field $\mathbb{Q}(\sqrt{d})$. Then, we have the following class number formula for $d\equiv 1\text{ }\left(\operatorname{mod}4\right)$: \begin{align*}
    \frac{h\operatorname{log}E}{d}=L_d\left(1\right).
\end{align*}
\end{theorem}
We now prove the main result, which is Theorem \ref{g invariant theorem}.
\begin{theorem}\label{g invariant theorem} The following equation holds, where $L_d\left(1\right)$ is a Dirichlet $L$-series:
\begin{align}
\frac{\pi}{\sqrt{58}}\operatorname{log}\left(g_{58}^4\right)=4L_{-8}\left(1\right)L_{29}\left(1\right).
\end{align}
\end{theorem}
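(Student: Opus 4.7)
The plan is to combine two tools already developed in the text: Wong's lattice-sum identity (\ref{log formula}) and the Zucker--Robertson decomposition in equation (74). Both expressions evaluate the same double sum, so equating them isolates $\log(g_{58}^4)$ in terms of the two Dirichlet $L$-values.

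First I would specialise Wong's lemma at $r=58$ and flip the sign inside the numerator to obtain
\[
\sum_{(m,n)\in\Lambda}\frac{(-1)^{m+1}}{m^2+58n^2}=\frac{\pi}{\sqrt{58}}\log(2g_{58}^4).
\]
Next I would evaluate the same lattice sum via the Zucker--Robertson formula (74). We take $2P=58$, hence $P=29$, which is prime (so $t=1$) and congruent to $1\pmod 4$, as required. At $s=1$, the prefactor $\bigl(1-(2/\mu)\,2^{1-s}\bigr)$ reduces to $1-(2/\mu)$. The sum over divisors $\mu\mid 29$ has only two terms. The $\mu=1$ contribution vanishes because $(2/1)=1$; the $\mu=29$ contribution survives because $29\equiv 5\pmod 8$ gives $(2/29)=-1$, so the prefactor equals $2$. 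With the paper's convention that $L_\mu$ is indexed by the underlying fundamental discriminant, the surviving pair is $L_{29}$ (since $29\equiv 1\pmod 4$) and $L_{-8}$ (the fundamental discriminant for $\mathbb{Q}(\sqrt{-2})$), which together reconstruct the discriminant data of $\mathbb{Q}(\sqrt{-58})$. Collecting the remaining numerical factors produces
\[
\sum_{(m,n)\in\Lambda}\frac{(-1)^{m+1}}{m^2+58n^2}=\frac{\pi}{\sqrt{58}}\log 2+4L_{-8}(1)L_{29}(1),
\]
which is exactly the identity (76).

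Equating the two evaluations of the sum and cancelling the common term $(\pi/\sqrt{58})\log 2$ from both sides yields
\[
\frac{\pi}{\sqrt{58}}\log(g_{58}^4)=4L_{-8}(1)L_{29}(1),
\]
which is the theorem.

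The main obstacle is the sign and character bookkeeping inside (74): one has to verify that the $\mu=1$ summand really drops out, pair $L_{29}$ with $L_{-8}$ rather than $L_{+8}$, and confirm that the final numerical coefficient is $4$ after combining $2^{1-t}=1$, the factor $1-(2/29)=2$, and any multiplicity carried by the $\pm$ sign choices in $L_{\pm\mu}L_{\mp 8P/\mu}$. Once this discriminant-pair and factor convention is pinned down, the remainder of the argument is an immediate substitution and subtraction; no further analytic input is needed.
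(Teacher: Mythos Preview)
Your argument is correct and is precisely the derivation the paper carries out in the text immediately preceding the theorem (equations (72)--(77)): specialise Wong's lemma at $r=58$, specialise the Zucker--Robertson formula (74) at $P=29$, $s=1$, and subtract the common $(\pi/\sqrt{58})\log 2$ term. Your bookkeeping on the divisor sum is right: the $\mu=1$ term dies because $(2/1)=1$, and the $\mu=29$ term survives with factor $2$ because $(2/29)=-1$, giving the coefficient $4$ and the pair $L_{29}L_{-8}$.

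It is worth noting, however, that the paper's formal proof block under this theorem does something \emph{different} from what you do: it takes the identity (84) as already established by the preceding discussion and instead evaluates the two $L$-values explicitly---$L_{-8}(1)=\pi/(4\sqrt{2})$ via the character sum, and $L_{29}(1)$ via Dirichlet's class number formula and the fundamental solution $(9801,1820)$ of $x^2-29y^2=1$---thereby extracting the closed form $g_{58}=\sqrt{(5+\sqrt{29})/2}$. So your proposal establishes the identity as stated, while the paper's proof block is really the subsequent computation of $g_{58}$ itself; the two together form the complete argument the paper intends.
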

\begin{proof}
One can construct the Dirichlet character table for $m=32$. Consequently, we obtain a value for $L_{-8}\left(1\right)$. That is, $L_{-8}\left(1\right)$ has a value of
\begin{align*}
    \frac{\pi}{32^{3/2}}\left|1+3-5-7+9+11-13-15+17+19-21-23+25+27-29-31\right|
\end{align*}
which is equal to $\frac{\pi}{4\sqrt{2}}$. Also, it can be shown that \begin{align*}
    L_{29}\left(1\right)=-\frac{1}{\sqrt{29}}\operatorname{log}\left[\left(\frac{2}{5+\sqrt{29}}\right)^2\right].
\end{align*}
As mentioned previously, evaluating the quotient in (\ref{eqn: edwards complicated}) is generally very difficult. As such, we turn to Dirichlet's class number formula (Theorem \ref{class number formula}). It is known that the class number of the real quadratic field $\mathbb{Q}\left(\sqrt{29}\right)$ is 1, so $h=1$. 

We then consider the fundamental unit. Suppose we have the real quadratic field $K=\mathbb{Q}\left(\sqrt{29}\right)$. Let $\Delta$ denote the discriminant of $K$, and because $29\equiv 1\text{ }\left(\operatorname{mod}4\right)$, then $\Delta=29$. For $a,b\in\mathbb{N}$, the fundamental unit is defined to be \begin{align*}
    \frac{a+b\sqrt{\Delta}}{2}\quad\text{where}\quad \left(a,b\right)\text{ is the smallest solution to }x^2-\Delta y^2=1.
\end{align*}
This is precisely Pell's equation since 29 is non-square! The Borwein brothers mentioned that the $g$-invariant $g_{58}$ is connected to the fundamental solution of Pell's equation \cite{borwein pi proof 1987}, but this connection was not explicitly established. We have done so here. One can then use Bhāskara's method to deduce that the desired $\left(a,b\right)$ is $\left(9801,1820\right)$. One checks that \begin{align*}
    9801^2-29\cdot 1820^2=1.
\end{align*}
Recall that (\ref{eqn: what to prove ramanujan}) contains a 9801 in the denominator too! As such, we have \begin{align*}
    \frac{\operatorname{log}\left(9801+1820\sqrt{29}\right)}{3\sqrt{29}}=L_{29}\left(1\right)=-\frac{1}{\sqrt{29}}\operatorname{log}\left[\left(\frac{2}{5+\sqrt{29}}\right)^2\right]
\end{align*}
Taking the product of $L_{-8}\left(1\right)$ and $L_{29}\left(1\right)$ yields the value of the desired $g$-invariant \begin{align*}
    g_{58}=\sqrt{\frac{5+\sqrt{29}}{2}}.
\end{align*}
Moreover, this yields the following nice relation as pointed out by \cite{borwein pi proof 1987,wong}: \[g_{58}^2=u_{29}=\frac{5+\sqrt{29}}{2}. \qedhere\]
\end{proof}
In fact, the complicated quotient of trigonometric products in (\ref{eqn: edwards complicated}) can be written as \begin{align*}
\frac{\left(\sin\left(\frac{2\pi}{29}\right)\right)^{2}\left(\sin\left(\frac{3\pi}{29}\right)\right)^{2}\left(\sin\left(\frac{8\pi}{29}\right)\right)^{2}\left(\sin\left(\frac{10\pi}{29}\right)\right)^{2}\left(\sin\left(\frac{11\pi}{29}\right)\right)^{2}\left(\sin\left(\frac{12\pi}{29}\right)\right)^{2}\left(\sin\left(\frac{14\pi}{29}\right)\right)^{2}}{\left(\sin\left(\frac{\pi}{29}\right)\right)^{2}\left(\sin\left(\frac{4\pi}{29}\right)\right)^{2}\left(\sin\left(\frac{5\pi}{29}\right)\right)^{2}\left(\sin\left(\frac{6\pi}{29}\right)\right)^{2}\left(\sin\left(\frac{7\pi}{29}\right)\right)^{2}\left(\sin\left(\frac{9\pi}{29}\right)\right)^{2}\left(\sin\left(\frac{13\pi}{29}\right)\right)^{2}}
\end{align*}
which simplifies to 
\begin{align}\label{eqn: after wolfram}
    \frac{\sin^2\left(\frac{4\pi}{58}\right) \sin^2 \left( \frac{6\pi}{58} \right)}{2^{10} \sin^2\left( \frac{\pi}{58} \right)\sin^2\left( \frac{5\pi}{58} \right) \sin^2\left( \frac{7\pi}{58} \right) \sin^2\left( \frac{8\pi}{58} \right) \sin^2\left( \frac{9\pi}{58} \right) \sin^2\left( \frac{12\pi}{58} \right) \sin^2\left( \frac{13\pi}{58} \right)}.
\end{align}
However, this expression is difficult to further simplify. Having said that, by considering our proof of Theorem \ref{g invariant theorem}, we can deduce that (\ref{eqn: after wolfram}) is equal to $g_{58}^4$, which as mentioned has a nice connection to Pell's equation $x^2-29y^2=1$. It turns out that the simplification of the trigonometric quotient (\ref{eqn: after wolfram}) is not a coincidence --- it follows by using the fact that each sine product can be written as the norm of a suitable cyclotomic unit in $\mathbb{Q}\left(\zeta_{58}\right)$, where $\zeta_{58}$ denotes a primitive 58th root of unity.

Anyway, we return to the main task. By applying Lemma \ref{lemma hypergeometric} and Theorem \ref{g invariant theorem} to (\ref{eqn: 1/pi in terms of everything}), we obtain \begin{align}\label{eqn: 1/pi almost done}
    \frac{1}{\pi}&=\sum_{n=0}^{\infty}\frac{1}{256^n}\cdot \frac{\left(4n\right)!}{\left(n!\right)^2}\left[\frac{\alpha\left(58\right)}{x_{58}\left(1+k_{58}^2\right)}-\frac{\sqrt{58}}{4g_{58}^{12}}+n\sqrt{58} \frac{g_{58}^2-g_{58}^{-12}}{2}\right]x_{58}^{2n+1}.
\end{align}
By Theorem \ref{g invariant theorem}, we have \begin{align*}
    \frac{g_{58}^{12}-g_{58}^{-12}}{2}=9801.
\end{align*}
Using (\ref{eqn: x_N formula}) and writing $k'=\sqrt{1-k^2}$, we also deduce that \begin{align*}
    k_{58}=(\sqrt{2}-1)^6(13\sqrt{58}-99)\quad\text{and}\quad x_{58}=\frac{1}{9801}.
\end{align*}
Other than Theorem \ref{formula for alpha}, the Borwein brothers provided a useful formula for $\alpha\left(58\right)$ in terms of the elliptic modulus $k$ (or rather, in terms of he singular value function of the first kind $\lambda^\ast$ and the $g$-invariant \cite{pi and the agm}. In fact, Borwein, Borwein and Bailey used an iterative algorithm to deduce it \cite{Borwein Pi Alpha}. As such, we have \begin{align*}
    \alpha\left(58\right)=3g_{58}^6k_{58}(33\sqrt{29}-148).
\end{align*}
We continue putting everything together into (\ref{eqn: 1/pi almost done}) to obtain the remarkable formula in (\ref{eqn: what to prove ramanujan})! This series converges exceptionally quickly, with each term adding 8 decimal digits of accuracy \cite{wong}.
\section{Concluding Remarks}
Ramanujan's series is beautiful. We give a remark on some nice coincidences. Recall the fundamental solution to Pell's equation $x^2-29y^2=1$ that was discussed earlier, i.e. \begin{align*}
    u_{29}=\frac{5+\sqrt{29}}{2},
\end{align*}
and \begin{align*}
    u_{29}^3=70+13\sqrt{29}\quad&\text{which implies}\quad 70^2-29\cdot 13^2=-1 \\
    u_{29}^6=9801+1820\sqrt{29}\quad&\text{which implies}\quad 9801^2-29\cdot 1820^2=1
\end{align*}
Also, $2^6\left(u_{29}^6+u_{29}^{-6}\right)^2=396^4$. The number 26390 in (\ref{eqn: what to prove ramanujan}) be factorised as $29\cdot 70\cdot 13$, and looking at the big picture, we have 
\begin{align*}
    \frac{1}{\pi}=\frac{2\sqrt{2}}{9801}\sum_{n=0}^{\infty}\frac{29\cdot 70\cdot 13n+1103}{\left(n!\right)^4}\frac{\left(4n\right)!}{396^{4n}}.
\end{align*} 
This is indeed beautiful.
\section{Acknowledgements}
Thang would like to thank his co-author Wangsa for his invaluable assistance with several aspects of the proof. The former first encountered Ramanujan's remarkable formula in 2013, when he was just ten-years-old. Back then, he knew about the sigma notation and a couple of famous series for $\pi$, which are the Basel problem (or Euler $2$-series) and Leibniz's formula. They are  \begin{align*}
    \zeta\left(2\right)=\sum_{n=1}^{\infty}\frac{1}{n^2}=\frac{\pi^2}{6}\quad\text{and}\quad \sum_{n=0}^{\infty}\frac{\left(-1\right)^n}{2n+1}=\frac{\pi}{4}.
\end{align*}
respectively \cite{wong}. He somehow chanced upon (\ref{eqn: what to prove ramanujan}) one day. Although having $\pi^2$ appearing in the evaluation of $\zeta\left(2\right)$ was already unexpected, he was surprised that there exists a formula for the reciprocal of $\pi$. It was only in early 2023 where Thang decided to look into the formula again. He also wishes to express his gratitude to Prof. Bruce Berndt for his insightful correspondence and inspiring lectures during his visit to Nanyang Technological University in September 2024.

Thang would also like to thank Wong Chieh-Lei, Chan Heng Huat, and Howard Cohl for providing valuable suggestions and insightful feedback, which greatly contributed to the completion of this work.

\end{document}